\begin{document}

\newtheorem{thm}{Theorem}
\newtheorem{lem}[thm]{Lemma}
\newtheorem{prop}[thm]{Proposition}
\newtheorem{cor}[thm]{Corollary}
\newtheorem{con}[thm]{Construction}
\newtheorem{claim}[thm]{Claim}
\newtheorem{obs}[thm]{Observation}
\newtheorem{defn}[thm]{Definition}
\newtheorem{example}[thm]{Example}
\newcommand{\di}{\displaystyle}
\def\cF{{\cal F}}
\def\cH{{\cal H}}
\def\cK{{\cal K}}
\def\cC{{\cal C}}
\def\cA{{\cal A}}
\def\cB{{\cal B}}
\def\cP{{\cal P}}
\def\ap{\alpha'}
\def\af{\alpha'_f}
\def\dfc{\mathrm{def}}
\def\df{\dfc_f}
\def\Frk{F_k^{2r+1}}
\def\nul{\varnothing} 
\def\st{\colon\,}   
\def\MAP#1#2#3{#1\colon\,#2\to#3}
\def\VEC#1#2#3{#1_{#2},\ldots,#1_{#3}}
\def\VECOP#1#2#3#4{#1_{#2}#4\cdots #4 #1_{#3}}
\def\SE#1#2#3{\sum_{#1=#2}^{#3}}  \def\SGE#1#2{\sum_{#1\ge#2}}
\def\PE#1#2#3{\prod_{#1=#2}^{#3}} \def\PGE#1#2{\prod_{#1\ge#2}}
\def\UE#1#2#3{\bigcup_{#1=#2}^{#3}}
\def\FR#1#2{\frac{#1}{#2}}
\def\FL#1{\left\lfloor{#1}\right\rfloor} 
\def\CL#1{\left\lceil{#1}\right\rceil}  
\def\esub{\subseteq}
\def\SM#1#2{\sum_{#1\in#2}}
\def\NN{{\mathbb N}}

\title{The difference and ratio of the fractional matching number and the matching number of graphs}
\author{
Ilkyoo Choi\thanks{Department of Mathematical Sciences, KAIST, Daejeon, South Korea, \texttt{ilkyoo@kaist.ac.kr}
research partially supported by the National Research Foundation of Korea (NRF) grant funded by the Korea government (MSIP) (NRF-2015R1C1A1A02036398)}
\and
Jaehoon Kim\thanks{School of Mathematics, 
University of Birmingham, Edgbaston, Birmingham, United Kingdom,
\texttt{kimJS@bham.ac.uk} research partially supported by the European Research Council under the European Union's Seventh Framework Programme
(FP/2007-2013) / ERC Grant Agreements no. 306349 (J. Kim)}
\and
Suil O\thanks{Department of Mathematics, Simon Fraser University,
Burnaby, BC, V5A 1S6, \texttt{osuilo@sfu.ca.}
research partially supported by an NSERC Grant of Bojan Mohar}
}

\maketitle

\begin{abstract}
Given a graph $G$, 
the {\it matching number} of $G$, written $\ap(G)$, is the maximum size of a matching in $G$,
and the {\it fractional matching number} of $G$, written $\af(G)$, is the maximum size of a fractional matching of $G$.
In this paper, we prove that if $G$ is an $n$-vertex connected graph that is neither $K_1$ nor $K_3$,
then $\af(G)-\ap(G) \le \frac{n-2}6$ and $\frac{\af(G)}{\ap(G)} \le \FR{3n}{2n+2}$.
Both inequalities are sharp, and we characterize the infinite family of graphs where equalities hold.
 
\end{abstract}

\section {Introduction}
For undefined terms, see~\cite{W}.
Throughout this paper, $n$ will always denote the number of vertices of a given graph.
A {\it matching} in a graph is a set of pairwise disjoint edges.
A {\it perfect matching} in a graph $G$ is a matching in which each vertex has an incident edge in the matching; its size must be $n/2$, where $n=|V(G)|$.
A {\it fractional matching} of $G$ is a function $\phi : E(G) \to [0,1]$ such that for each vertex $v$, $\sum_{e \in \Gamma(v)} \phi(e) \le 1$, where
$\Gamma(v)$ is the set of edges incident to $v$,
and the {\it size of a fractional matching} $\phi$ is $\sum_{e \in E(G)} \phi(e)$.
Given a graph $G$, 
the {\it matching number} of $G$, written $\ap(G)$, is the maximum size of a matching in $G$,
and the {\it fractional matching number} of $G$, written $\af(G)$, is the maximum size of a fractional matching of $G$.

Given a fractional matching $\phi$, since  $\sum_{e \in \Gamma(v)} \phi(e) \le 1$  for each vertex $v$, we have that $2\SM e{E(G)}\phi(e)\le n$, which implies $\af(G)\le n/2$.  
By viewing every matching as a fractional matching it follows that $\af(G) \ge \alpha'(G)$ for every graph $G$, but equality need not hold. 
 For example, the fractional matching number of a $k$-regular graph equals $n/2$ by setting weight $1/k$ on each edge,
 but the matching number of a $k$-regular graph can be much smaller than $n/2$.
Thus it is a natural question to find the largest difference between $\af(G)$ and $\ap(G)$ in a (connected) graph.

In Section 3 and Section 4, we prove tight upper bounds on $\af(G)-\ap(G)$ and $\FR{\af(G)}{\ap(G)}$, respectively, for an $n$-vertex connected graph $G$, and we characterize the infinite family of graphs achieving equality for both results. 
As corollaries of both results, we have upper bounds on both $\af(G)-\ap(G)$ and $\FR{\af(G)}{\ap(G)}$ for an $n$-vertex graph $G$, and we characterize the graphs achieving equality for both bounds.


Our proofs use the famous Berge--Tutte Formula~\cite{B} for the matching number as well as its fractional analogue.
We also use the fact that there is a fractional matching $\phi$ for which $\sum_{e \in E(G)} \phi(e) = \af(G)$
such that $f(e) \in \{0, 1/2, 1\}$ for every edge $e$, and some refinements of the fact.
We can prove both Theorem~\ref{main1} and Theorem~\ref{main2} with two different techniques, and for the sake of the readers we demonstrate each method in the proofs of Theorem~\ref{main1} and Theorem~\ref{main2}.

\section{Tools}
In this section, we introduce the tools we used to prove the main results.
To prove Theorem~\ref{main1}, we use Theorem~\ref{BTF} and Theorem ~\ref{FBTF}.
For a graph $H$, let $o(H)$ denote
the number of components of $H$ with an odd number of vertices.  Given a
graph $G$ and $S\esub V(G)$, define the {\it deficiency} $\dfc(S)$ by
$\dfc(S)=o(G-S)-|S|$, and let $\dfc(G)=\max_{S\esub V(G)}\dfc(S)$.
Theorem~\ref{BTF} is the famous Berge--Tutte formula, which is a general version of Tutte's 1-factor Theorem~\cite{T}.

\begin{thm}[\cite{B}]\label{BTF}
For any $n$-vertex graph $G$, $\ap(G) = \frac 12 \left(n -\dfc{(G)}\right).$
\end{thm}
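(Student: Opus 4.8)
The plan is to prove the two inequalities $\ap(G)\le\FR12(n-\dfc(G))$ and $\ap(G)\ge\FR12(n-\dfc(G))$ separately. Writing $d=\dfc(G)$, each amounts to a statement about the number $n-2\ap(G)$ of vertices left uncovered by a maximum matching: the first says this number is at least $d$, the second that it is at most $d$. The upper bound on $\ap(G)$ is an elementary counting argument, whereas for the lower bound I would reduce to Tutte's $1$-factor theorem~\cite{T} via a gadget construction, and this reduction is the crux of the proof.

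For $\ap(G)\le\FR12(n-\dfc(G))$, fix any $S\esub V(G)$ and any matching $M$, and bound the number of $M$-uncovered vertices below by $\dfc(S)=o(G-S)-|S|$. Each odd component $C$ of $G-S$ has odd order, so $M$ cannot saturate $C$ using only edges inside $C$; hence $C$ contains a vertex that is either $M$-unsaturated or matched by $M$ to a vertex of $S$. Since the edges of $M$ meeting $S$ are disjoint, at most $|S|$ odd components can be matched into $S$, so at least $o(G-S)-|S|$ odd components contribute an unsaturated vertex. Taking the maximum over $S$ shows that a maximum matching misses at least $\dfc(G)$ vertices, giving $2\ap(G)\le n-\dfc(G)$.

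For the reverse inequality I would form $G'$ from $G$ by adding a set $U$ of $d=\dfc(G)$ new vertices, each joined to every vertex of $G$ and pairwise nonadjacent. A parity check (the number of odd components has the same parity as the order, so $\dfc(S)\equiv n-2|S|\equiv n\pmod2$) shows $d\equiv n\pmod2$, whence $G'$ has even order; one also records $0\le d\le n$, using $\dfc(\nul)=o(G)\ge0$ and $\dfc(S)\le n-2|S|$. The heart of the matter is verifying Tutte's condition $o(G'-T)\le|T|$ for every $T\esub V(G')$. When $U\esub T$ we have $G'-T=G-S$ with $S=T\cap V(G)$, so the required $o(G-S)\le|T|=d+|S|$ is exactly $\dfc(S)\le d$, which holds by the definition of $d$; when $U\not\esub T$, any added vertex outside $T$ is adjacent to all surviving original vertices, so $G'-T$ has at most one nontrivial component, and a short check using $d\le n$ closes this case. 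Tutte's theorem then yields a perfect matching $M'$ of $G'$; deleting the $d$ edges of $M'$ that meet $U$ leaves a matching of $G$ missing at most $d$ vertices, so $2\ap(G)\ge n-d$. Combining the two bounds gives the formula. The main obstacle is precisely this case analysis for Tutte's condition, together with choosing the gadget correctly (the number of added vertices and their adjacencies) so that both cases close; the upper bound and the parity bookkeeping are routine.
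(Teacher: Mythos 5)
Your proof is correct. The paper states Theorem~\ref{BTF} as a classical result of Berge and cites it without proof, so there is no in-paper argument to compare against; what you have written is the standard deficiency-form derivation from Tutte's $1$-factor theorem, and all of its steps check out. The upper bound $n-2\ap(G)\ge \dfc(S)$ is argued correctly: each odd component of $G-S$ that is fully saturated must send a matching edge into $S$, at most $|S|$ components can do so, and the remaining components each contribute a distinct unsaturated vertex. For the lower bound, your gadget with $d=\dfc(G)$ new vertices forming an independent set, each joined to all of $V(G)$, works: the parity observation $\dfc(S)\equiv n\pmod 2$ makes $|V(G')|$ even, the case $U\esub T$ of Tutte's condition is exactly $\dfc(S)\le d$, and in the case $U\not\esub T$ either some original vertex survives (so $G'-T$ is connected and $o(G'-T)\le 1\le|T|$, with $T=\nul$ handled by evenness of $|V(G')|$) or $V(G)\esub T$ (so $o(G'-T)\le d\le n\le|T|$). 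Since $U$ is independent, exactly $d$ edges of the resulting perfect matching leave $U$, and deleting them gives a matching of $G$ of size $(n-d)/2$. One could equally take $U$ to be a clique (the more common textbook choice), which only helps in the last step; either variant closes the argument.
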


For the fractional analogue of the Berge--Tutte formula, let $i(H)$ denote the number of isolated vertices in $H$.  
Given a graph $G$ and $S\esub V(G)$, let $\df(S)=i(G-S)-|S|$ and $\df(G)=\max_{S\esub V(G)}\df(S)$.
Theorem~\ref{FBTF} is the fractional version of the Berge--Tutte Formula.
This is also the fractional analogue of Tutte's $1$-Factor Theorem 
saying that $G$ has a fractional perfect matching if and only if $i(G-S)\le|S|$ for all $S\esub V(G)$
(implicit in Pulleyblank~\cite{P}), where a fractional perfect matching is a fractional matching $f$ such that 
$2\sum_{e \in E(G)} f(e)=n$.

\begin{thm}[\cite{SU}~See Theorem 2.2.6]\label{FBTF}
For any $n$-vertex graph $G$, $\af(G)=\frac 12 (n-\dfc_f(G))$.
\end{thm}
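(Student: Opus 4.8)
The plan is to establish the two inequalities $\af(G)\le \frac12\bigl(n-\df(G)\bigr)$ and $\af(G)\ge \frac12\bigl(n-\df(G)\bigr)$ separately, since together they give the claimed identity. For the lower bound it is convenient to pass to the dual linear program: writing the fractional matching number as the optimum of $\max\{\sum_{e\in E(G)}\phi(e): \sum_{e\in\Gamma(v)}\phi(e)\le 1\ \forall v,\ \phi\ge 0\}$, strong LP duality identifies $\af(G)$ with the fractional vertex cover number $\tau_f(G)$, the optimum of $\min\{\sum_v y_v : y_u+y_v\ge 1\ \forall uv\in E(G),\ y\ge 0\}$.

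For the upper bound I would argue directly, with no appeal to duality. Fix any fractional matching $\phi$ and any $S\esub V(G)$, let $I$ be the set of isolated vertices of $G-S$, and set $w(v)=\sum_{e\in\Gamma(v)}\phi(e)\le 1$. Every edge meeting $I$ has its other endpoint in $S$, so the total $\phi$-weight on $I$--$S$ edges equals $\sum_{v\in I}w(v)$ and, being carried entirely by edges incident to $S$, is at most $\sum_{v\in S}w(v)\le|S|$. Splitting $2\sum_{e}\phi(e)=\sum_{v}w(v)$ over $I$, $S$, and the remaining $n-|S|-i(G-S)$ vertices and bounding each block by the estimates above yields $2\sum_e\phi(e)\le |S|+|S|+\bigl(n-|S|-i(G-S)\bigr)=n-\df(S)$. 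Taking the maximum over $\phi$ and then over $S$ gives $\af(G)\le\frac12\bigl(n-\df(G)\bigr)$.

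For the matching lower bound I would exploit the known half-integrality of the fractional vertex cover polytope: there is an optimal $y$ for the cover LP with every $y_v\in\{0,\tfrac12,1\}$. Let $S_1,S_{1/2},S_0$ be the vertices receiving values $1,\tfrac12,0$. The covering constraint $y_u+y_v\ge1$ forbids edges inside $S_0$ and edges between $S_0$ and $S_{1/2}$, so every neighbour of an $S_0$-vertex lies in $S_1$. Hence, taking $S=S_1$, every vertex of $S_0$ is isolated in $G-S$, giving $i(G-S)\ge|S_0|$ and therefore $\df(G)\ge|S_0|-|S_1|=n-|S_{1/2}|-2|S_1|=n-2\bigl(|S_1|+\tfrac12|S_{1/2}|\bigr)=n-2\tau_f(G)=n-2\af(G)$. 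Rearranged, this is exactly $\af(G)\ge\frac12\bigl(n-\df(G)\bigr)$, completing the proof.

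The only real content is the reverse (lower-bound) inequality, i.e.\ producing a single set $S$ that simultaneously witnesses the deficiency; the half-integral optimal fractional cover does this for free, its weight-$1$ block being precisely the set whose deletion isolates the weight-$0$ vertices. If one prefers to avoid linear programming duality, the same witness can be extracted from a half-integral maximum fractional matching (a matching together with vertex-disjoint odd cycles) by an alternating-path analysis of the uncovered vertices, but verifying that no augmentation is possible makes that route noticeably more delicate; the duality argument is what I would actually carry out.
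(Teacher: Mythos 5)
The paper offers no proof of this statement at all: it is quoted from Scheinerman--Ullman (Theorem 2.2.6), so there is nothing internal to compare against, and your argument stands or falls on its own. It is correct. The upper bound is a clean direct counting argument: partitioning $2\sum_e\phi(e)=\sum_v w(v)$ over $I$, $S$, and the rest, and observing that every edge meeting $I$ lands in $S$ (so $\sum_{v\in I}w(v)\le\sum_{v\in S}w(v)\le|S|$), does give $2\sum_e\phi(e)\le n-\df(S)$ for every $S$; one small wording slip is that you then want this for the \emph{maximizer} of $\df(S)$ (equivalently, minimize the right-hand side over $S$), not ``the maximum over $S$,'' but the inequality holds for all $S$ so nothing breaks. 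The lower bound correctly combines two standard black boxes --- strong LP duality ($\af(G)=\tau_f(G)$) and half-integrality of the fractional vertex-cover polytope --- and the observation that the weight-$1$ block $S_1$ of a half-integral optimal cover isolates every weight-$0$ vertex, yielding $\df(G)\ge|S_0|-|S_1|=n-2\tau_f(G)$; the arithmetic checks out. This is essentially the standard textbook derivation. Note that the paper's own Theorem~\ref{frac01/21} and Observation~\ref{obs} would let you run your sketched alternative (extracting the witness $S$ from a half-integral maximum fractional matching rather than from the dual), which would make the tools more uniform with the rest of the paper, but the duality route you carry out is complete as written given the two cited classical facts.
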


When we characterize the equalities in the bounds of Theorem~\ref{main1} and Theorem~\ref{main2},
we need the following proposition.
Recall that $G[S]$ is the graph induced by a subset of the vertex set $S$.

\begin{prop}[\cite{SU}~See Proposition 2.2.2]\label{fracequiv}
The following are equivalent for a graph $G$.\\
(a) $G$ has a fractional perfect matching.\\
(b) There is a partition $\{V_1, \ldots, V_n\}$ of the vertex set $V(G)$ such that, for each $i$, the graph $G[V_i]$ is either $K_2$ or Hamiltonian. \\
(c) There is a partition $\{V_1,\ldots, V_n\}$ of the vertex set $V(G)$ such that, for each $i$, the graph $G[V_i]$ is either $K_2$ or Hamiltonian graph on an odd number of vertices.
\end{prop}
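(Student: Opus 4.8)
The plan is to prove the cycle of implications (b)$\Rightarrow$(a)$\Rightarrow$(c)$\Rightarrow$(b); since (c) is merely the special case of (b) in which every non-$K_2$ part has odd order, the implication (c)$\Rightarrow$(b) is immediate, and the real work lies in the first two.

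For (b)$\Rightarrow$(a), I would build a fractional perfect matching directly from the partition. On each part inducing a $K_2$, assign weight $1$ to its single edge; on each part $G[V_i]$ that is Hamiltonian, fix a Hamiltonian cycle of $G[V_i]$, assign weight $1/2$ to each of its edges, and assign weight $0$ to every other edge of $G$. Then each vertex in a $K_2$-part is saturated by its weight-$1$ edge, and each vertex in a Hamiltonian part lies on exactly two weight-$1/2$ cycle edges, so the constraint $\sum_{e \in \Gamma(v)} f(e)\le 1$ holds with equality at every vertex. Summing these equalities over all vertices gives $2\sum_{e\in E(G)}f(e)=n$, so $f$ is a fractional perfect matching.

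For (a)$\Rightarrow$(c), I would invoke the half-integrality fact quoted in the introduction: since a fractional perfect matching has size $n/2=\af(G)$, there is an optimal half-integral fractional matching $f$ with values in $\{0,1/2,1\}$, and it is again perfect, so $\sum_{e\in\Gamma(v)}f(e)=1$ at every vertex $v$. The structural heart of the argument is to read the partition off the support of $f$. At each vertex the saturation constraint forces \emph{either} a single incident weight-$1$ edge \emph{or} exactly two incident weight-$1/2$ edges. Hence the weight-$1$ edges form a matching saturating some vertices, while the weight-$1/2$ edges induce a $2$-regular subgraph on the remaining vertices, that is, a vertex-disjoint union of cycles. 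The weight-$1$ edges then furnish the $K_2$ parts, and each $1/2$-weighted cycle furnishes a part $V_i$ whose induced subgraph $G[V_i]$ contains that cycle and is therefore Hamiltonian.

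The only remaining point, and the one subtlety to watch, is the parity requirement in (c): the cycles produced above may be even. To repair this, I would observe that an even cycle in the $1/2$-support can be rerouted by alternating weights $1$ and $0$ around it; this keeps every vertex of the cycle saturated while replacing the cycle by a collection of $K_2$ parts, and it preserves both half-integrality and perfectness of $f$. After applying this swap to every even cycle, each surviving $1/2$-weighted cycle is odd, and the partition into $K_2$ parts together with these odd cycles is precisely the partition demanded by (c). I expect this even-cycle elimination, together with the verification that the $1/2$-support really is a disjoint union of cycles, to be the main (though essentially routine) obstacle; the rest is bookkeeping with the saturation constraint.
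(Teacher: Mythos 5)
Your argument is correct, but note that the paper does not actually prove this proposition: it is quoted from Scheinerman and Ullman (Proposition 2.2.2 of \cite{SU}) as a black box, so there is no in-paper proof to compare against. Judged on its own, your cycle of implications is sound and is essentially the standard argument. The implication (c)$\Rightarrow$(b) is indeed trivial; (b)$\Rightarrow$(a) via weight $1$ on $K_2$-parts and weight $1/2$ on a Hamiltonian cycle of each other part is exactly right; and for (a)$\Rightarrow$(c) your use of the half-integrality theorem (Theorem~\ref{frac01/21}) correctly forces every saturated vertex to meet either one $1$-edge or exactly two $\frac12$-edges, so the $\frac12$-support is $2$-regular and hence a disjoint union of cycles of length at least $3$. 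Your even-cycle repair --- replacing the weights around an even cycle by alternating $1$ and $0$ --- preserves saturation and total weight, and it is the same device the paper itself uses in the proof of Observation~\ref{obs}(a) to kill paths and even cycles in the $\frac12$-support; so your proof is not only valid but consistent in spirit with the techniques the authors do write out. The one point worth stating explicitly, which you handle implicitly, is that a maximum half-integral fractional matching of a graph with $\af(G)=n/2$ must saturate every vertex (since the vertex sums are each at most $1$ and total $n$); with that sentence added, the write-up is complete.
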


Theorem~\ref{frac01/21} and Observation~\ref{obs} are used to prove Theorem~\ref{main2}.

\begin{thm}[\cite{SU}~See Theorem 2.1.5]\label{frac01/21}
For any graph $G$, there is a fractional matching $f$ for which$$\sum_{e\in E(G)} f(e)=\af(G)$$
such that $f(e) \in \{0, 1/2, 1\}$ for every edge $e$.
\end{thm}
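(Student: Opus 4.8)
The plan is to phrase the statement as a linear program and exploit the combinatorial structure of its extreme points. Consider the \emph{fractional matching polytope}
\[
P=\Bigl\{x\in\mathbb{R}^{E(G)}\st x_e\ge 0\ \text{for all }e,\ \ \SM{e}{\Gamma(v)}x_e\le 1\ \text{for all }v\in V(G)\Bigr\},
\]
so that $\af(G)=\max_{x\in P}\SM{e}{E(G)}x_e$. Since $P$ is bounded (each coordinate satisfies $x_e\le 1$) and nonempty, this maximum is attained at an extreme point $x^\ast$ of $P$. It therefore suffices to prove the purely structural fact that \emph{every extreme point of $P$ is half-integral}, i.e.\ $x^\ast_e\in\{0,\tfrac12,1\}$ for every edge $e$; this $x^\ast$ is then the desired fractional matching $f$.

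Next I would characterize extreme points by perturbation. Writing $F=\{e\st 0<x^\ast_e<1\}$ for the strictly fractional edges, a short case check shows that any edge with $x^\ast_e=1$ forces both endpoints to be ``tight'' (their constraint holds with equality) and isolated in $F$, while an edge with $x^\ast_e=0$ cannot be moved at all. Consequently $x^\ast$ fails to be extreme precisely when there is a nonzero $d\in\mathbb{R}^{E(G)}$ supported on $F$ with $\SM{e}{\Gamma(v)}d_e=0$ at every tight vertex $v$; for such $d$ both $x^\ast\pm\epsilon d$ lie in $P$ for small $\epsilon>0$. The task reduces to showing that if some $x^\ast_e\in(0,1)\setminus\{\tfrac12\}$ then such a $d$ exists. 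I would study the subgraph $H=(V(G),F)$ and observe first that every leaf of $H$ is non-tight: a degree-one vertex of $H$ has a single incident $F$-edge, no other edge at it can carry value $1$ without violating feasibility, so its constraint has slack.

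The main structural step is to pin down the components of $H$. Alternating $\pm1$ around any even cycle gives a valid $d$, so $H$ has no even cycle; similarly a path joining two leaves, or two cycles within one component, yields a valid $d$. Hence each component of $H$ is unicyclic with an odd cycle, and, using the slack at leaves together with real (not merely $\pm1$) perturbation magnitudes to absorb the odd cycle's imbalance into a pendant path, one rules out pendant trees entirely. Thus each component of $H$ is a single odd cycle. On such a cycle the incidence matrix is nonsingular, so a genuine perturbation requires slack at some cycle vertex; the alternating assignment (which, because the cycle is odd, has net weight $2$ at exactly one vertex and $0$ elsewhere) supplies one unless every cycle vertex is tight. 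Tightness at all vertices gives $x^\ast_{e_{i-1}}+x^\ast_{e_i}=1$ around an odd cycle, whose unique solution is $x^\ast\equiv\tfrac12$, and together with $x^\ast_e\in\{0,1\}$ off $F$ this yields the claim.

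I expect the delicate part to be the odd-cycle parity phenomenon: for bipartite (even-cycle) structures the alternating $\pm1$ vector lies in the kernel of the incidence matrix and lets one perturb toward an integral solution, whereas an odd cycle is exactly the obstruction to this, which is precisely why the value $\tfrac12$ is unavoidable. Handling pendant trees also requires care, since the perturbation must respect tightness only at tight vertices while using the freedom at the necessarily non-tight leaves, and must use non-unit magnitudes to cancel the odd cycle's imbalance at the point of attachment.
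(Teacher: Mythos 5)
Your argument is sound, but note that the paper itself offers no proof of this statement: it is imported verbatim as Theorem 2.1.5 of Scheinerman--Ullman \cite{SU} and used as a black box, so there is no in-paper proof to compare against. What you give is the standard self-contained justification --- half-integrality of the extreme points of the fractional matching polytope via a perturbation vector $d$ supported on the strictly fractional edges --- and all the key steps check out: $1$-edges isolate their (tight) endpoints from the fractional support $H$, leaves of $H$ have slack, even cycles, leaf-to-leaf paths, bicyclic components, and pendant trees each yield a feasible $\pm\epsilon d$ perturbation, and the only surviving components are odd cycles with all vertices tight, forcing the value $\tfrac12$ throughout. Your instinct about where the delicacy lies (the odd-cycle obstruction and the non-unit magnitudes needed to push a cycle's imbalance out to a slack leaf) is exactly right; the only cosmetic caveat is that you need only the easy direction of your ``fails to be extreme precisely when'' equivalence, and you should say explicitly that the tree components of $H$ (including single edges) are excluded by the leaf-to-leaf argument before asserting every component is unicyclic.
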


Given a fractional matching $f$, an {\it unweighted} vertex $v$ is a vertex with $\sum_{e \in \Gamma(v)} f(e) = 0$, and a {\it full} vertex $v$ is a vertex with $f(vw)=1$ for some vertex $w$.
Note that $w$ is also a full vertex. An {\it $i$-edge} $e$ is an edge with $f(e)=i$. Note that the existence of an 1-edge guarantees the existence of two full vertices. A vertex subset $S$ of a graph $G$ is {\it independent} if $E(G[S]) = \emptyset$, where $G[S]$ is the graph induced by $S$.

\begin{obs}\label{obs}
Among all the fractional matchings of an $n$-vertex graph $G$ satisfying the conditions of Theorem~\ref{frac01/21}, let $f$ be a fractional matching 
with the greatest number of edges $e$ with $f(e)=1$. Then we have the following:\\
(a) The graph induced by the $\frac 12$-edges is the union of odd cycles. Furthermore, if $C$ and $C'$ are two disjoint cycles in the graph induced by $\frac 12$-edges, then there is no edge $uu'$ such that $u \in V(C)$ and $u'\in V(C')$.\\
(b) The set $S$ of the unweighted vertices  is independent. Furthermore, every unweighted vertex is adjacent only to a full vertex. \\
(c) $\ap(G) \ge w_1 + \sum_{i=1}^{\infty} i c_i$, $\af(G) = w_1 + \sum_{i=1}^{\infty} (\frac{2i+1}2)c_i$, and $n=w_0+2w_1+\sum_{i=1}^{\infty}(2i+1)c_i$, where $w_0$, $w_1$, and $c_i$ are the number of unweighted vertices, the number of 1-edges, and the number of odd cycles of length $2i+1$ in the graph induced by $\frac 12$-edges in $G$, respectively.
\end{obs}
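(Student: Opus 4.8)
The plan is to exploit the two defining optimality properties of $f$: it maximizes $\sum_{e\in E(G)} f(e)$ over all $\{0,\frac12,1\}$-valued fractional matchings, and among those it uses as many $1$-edges as possible. The engine throughout is \emph{local reweighting}: if some configuration can be replaced by another $\{0,\frac12,1\}$-valued fractional matching that either (i) has strictly larger total weight, or (ii) has the same total weight but strictly more $1$-edges, then $f$ was not chosen as claimed. As a preliminary shared observation, since $\sum_{e\in\Gamma(v)}f(e)\le 1$, every vertex meets at most two $\frac12$-edges, so the graph $H$ induced by the $\frac12$-edges has maximum degree $2$ and is a disjoint union of paths and cycles; its vertex set consists exactly of the vertices incident to a $\frac12$-edge, so $H$ has no isolated vertices.

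For part (a) I would eliminate each forbidden component of $H$ by a reweighting. On a path component with edges $e_1,\dots,e_k$, assign weights alternately $1,0,1,\dots$ starting from one end; this is a valid fractional matching, and when $k$ is odd it raises the total weight above $k/2$, violating (i), while when $k$ is even it preserves the total weight $k/2$ but creates $1$-edges, violating (ii). An even cycle can likewise be rematched by alternating $1,0,1,0,\dots$ around it, preserving the total weight while producing $1$-edges, violating (ii); hence every component of $H$ is an odd cycle. For the ``furthermore'', note that a vertex on a $\frac12$-cycle already has weight-sum $1$, so every edge leaving it is a $0$-edge. If $uu'$ joined two distinct odd cycles $C,C'$ of lengths $2a+1$ and $2b+1$, then $V(C)\cup V(C')$ has even order, and taking $uu'$ together with a perfect matching of each even path $C-u$ and $C'-u'$ covers these vertices by a matching of size $a+b+1$; this equals the old fractional weight $\frac{(2a+1)+(2b+1)}2$ there, so the total weight is unchanged while new $1$-edges appear, violating (ii).

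For part (b), independence is immediate: if two unweighted vertices were adjacent, the $0$-edge between them could be raised to a $1$-edge, increasing the total weight and violating (i). For the second assertion I would first record, using (a), that every vertex has weight-sum either $0$ (unweighted) or $1$, since any vertex meeting a $\frac12$-edge lies on an odd $\frac12$-cycle and hence meets exactly two $\frac12$-edges and no $1$-edge. Thus a non-full neighbor $w$ of an unweighted vertex $v$ is either unweighted---excluded by independence---or lies on an odd $\frac12$-cycle $C$; in the latter case $\{v\}\cup V(C)$ has even order, and matching $v$ to $w$ and taking a perfect matching of the even path $C-w$ covers these vertices with total weight $\frac{|V(C)|+1}2$, which exceeds the old weight $\frac{|V(C)|}2$ by $\frac12$, violating (i). Hence $w$ must be full.

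Finally, part (c) is bookkeeping resting on (a) and (b). The three vertex types---unweighted vertices, endpoints of $1$-edges, and vertices of $\frac12$-cycles---partition $V(G)$ by the weight-sum dichotomy above; counting gives $n=w_0+2w_1+\sum_i (2i+1)c_i$, since each $1$-edge contributes two full vertices and each cycle of length $2i+1$ contributes $2i+1$ vertices. Summing $f$ over all edges gives $\af(G)=w_1+\sum_i\frac{2i+1}2 c_i$. For the lower bound on $\ap(G)$, take all $w_1$ of the $1$-edges together with a maximum matching (of size $i$) in each odd $\frac12$-cycle of length $2i+1$; these edge sets are pairwise vertex-disjoint, so their union is a matching of size $w_1+\sum_i i c_i$. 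The main obstacle is part (a): each reweighting must be verified to be a legitimate fractional matching that leaves all weight-sums at untouched vertices unchanged, which holds precisely because every vertex incident to a modified edge has all of its remaining incident edges of weight $0$.
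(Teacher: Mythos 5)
Your proof is correct and follows essentially the same strategy as the paper's: local reweighting arguments that contradict either the maximality of $\sum_e f(e)$ or, failing that, the maximality of the number of $1$-edges, followed by the same bookkeeping for part (c). The only differences are cosmetic (e.g., in part (b) you rematch the whole odd cycle to get a strict weight increase, whereas the paper zeroes only two cycle edges to get equal weight with more $1$-edges), and you are in fact slightly more careful than the paper in separating the odd-path case, where the weight strictly increases, from the even-path and even-cycle cases.
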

\begin{proof}
(a) The graph induced by the $\frac 12$-edges cannot have a vertex with degree at least 3 since $\sum_{e \in \Gamma(v)} f(e) \le 1$ for each vertex $v$.
Thus the graph must be a disjoint union of paths  or cycles. If the graph contains a path or an even cycle, then by replacing weight 1/2 on each edge on the path or the even cycle with weight 1 and 0 alternatively, we can have a fractional matching with the same fractional matching number and more edges with weight 1, which contradicts the choice of $f$. Thus the graph induced by the $\frac 12$-edges is the union of odd cycles. If there is an edge $uv$ such that $u \in V(C)$ and $v\in V(C')$, where $C$ and $C'$ are two different odd cycles induced by some $\frac12$-edges, then $f(uv)=0$, since $\sum_{e \in \Gamma(x)} f(e) \le 1$ for each vertex $x$. By replacing weights 0 and 1/2 on the edge $uv$ and the edges on $C$ and $C'$ with weight 1 on $uv$, and 0 and 1 on the edges in $E(C)$ and $E(C')$ alternatively, not violating the definition of a fractional matching, we have a fractional matching with the same fractional matching number with more edges with weight 1, which is a contradiction. Thus we have the desired result.\\
(b) If two unweighted vertices $u$ and $v$ are adjacent, then we can put a positive weight on the edge $uv$, which contradicts the choice of $f$. If there exists an unweighted vertex $x$, which is not incident to any full vertex, then $x$ must be adjacent to a vertex $y$ such that $f(yy_1)=1/2$ and $f(yy_2)=1/2$  for some vertices $y_1$ and $y_2$. By replacing the weights 0, 1/2, and 1/2 on $xy$, $yy_1$, and $yy_2$ with 1, 0, 0, respectively,
we have a fractional matching with the same fractional matching number with more edges with weight 1, which is a contradiction. \\
(c) By the definitions of $w_0$, $w_1$, and $c_i$, we have the desired result. 
\end{proof}

\section{Sharp upper bound for $\af(G) - \ap(G)$}\label{lower}
What are the structures of the graphs having the maximum difference between the fractional matching number and the matching number in an $n$-vertex connected graph? 
The graphs may have big fractional matching number and small matching number.
So, by the Berge--Tutte Formula and its fractional version, they may have a vertex subset $S$ such that almost all of the odd components of $G-S$ have at least three vertices in order to get $S$ to have small fractional deficiency and big deficiency. 
This is our idea behind the proof of Theorem~\ref{main1}.

\begin{figure}
\begin{center}
\includegraphics[height=4.5cm]{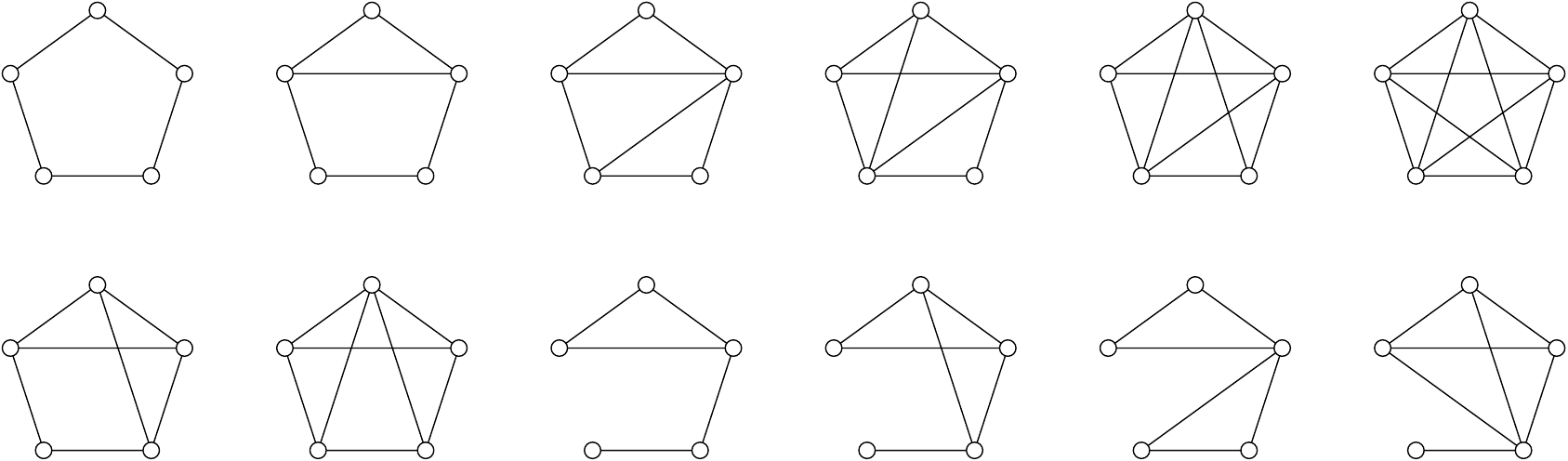}
\end{center}
\caption{All 5-vertex graphs in Theorem~\ref{main1} $(i)$ and Theorem~\ref{main2} $(i)$}
\end{figure}

\begin{figure}
\begin{center}
\includegraphics[height=4.5cm]{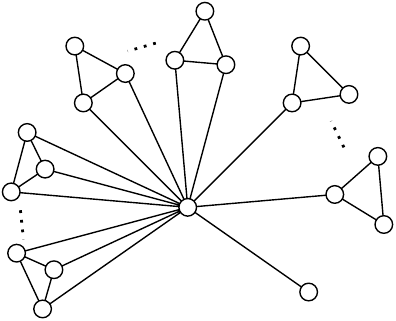}
\end{center}
\caption{All graphs in Theorem~\ref{main1} $(ii)$ and Theorem~\ref{main2} $(ii)$}
\end{figure}

\begin{thm}\label{main1}
For $n \ge 5$, if $G$ is a connected graph with $n$ vertices,
then $\alpha'_f(G) - \alpha'(G) \le \frac{n-2}6$, and equality holds only when either \\
(i) $n=5$ and either $C_5$ is subgraph of $G$ or  $K_2+K_3$ is a subgraph of $G$, or \\
(ii) $G$ has a vertex $v$ such that the components of $G-v$ are all $K_3$ except one single vertex.
\end{thm}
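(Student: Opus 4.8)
The plan is to convert the whole statement into a comparison of deficiencies. Combining Theorem~\ref{BTF} and Theorem~\ref{FBTF} gives $\af(G)-\ap(G)=\frac12(\dfc(G)-\df(G))$, so it suffices to bound $\dfc(G)-\df(G)$. I would fix a set $S\esub V(G)$ attaining $\dfc(G)=o(G-S)-|S|$ and split the components of $G-S$ into three groups: the $a$ isolated vertices, the $b$ odd components of order at least $3$, and the $c$ even components. Since $\df(G)\ge \df(S)=i(G-S)-|S|=a-|S|$, substituting $o(G-S)=a+b$ and $i(G-S)=a$ yields the crude bound $\af(G)-\ap(G)\le \frac12(o(G-S)-i(G-S))=b/2$.

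For the counting step, every odd component of order at least $3$ contributes at least $3$ vertices and every even component at least $2$, so $n\ge |S|+a+3b+2c$. Hence $6(\af(G)-\ap(G))\le 3b\le n-(|S|+a+2c)$, which is at most $n-2$ as soon as $|S|+a+2c\ge 2$. This settles the generic case, and I would then dispose of the two degenerate possibilities. If $|S|+a+2c=0$, then $G-S=G$ is a single odd component, so $b=1$ and $\af(G)-\ap(G)\le 1/2\le (n-2)/6$ because $n\ge 5$. If $|S|+a+2c=1$, the only option is $(|S|,a,c)=(1,0,0)$: here $S=\{v\}$ and every component of $G-v$ is odd of order at least $3$. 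Since $S$ is a maximizer, $\dfc(G)=b-1$ exactly, so Theorem~\ref{BTF} gives $\ap(G)=(n-b+1)/2$; combined with $\af(G)\le n/2$ and $n\ge 3b+1$ this forces $\af(G)-\ap(G)\le (b-1)/2\le (n-4)/6$, strictly below $(n-2)/6$.

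The remaining and most delicate part is the equality characterization, where I expect the real work to lie. Equality $\af(G)-\ap(G)=(n-2)/6$ forces $3b=n-2$ together with tightness of the crude bound; feeding $3b=n-2$ back into $n\ge |S|+a+3b+2c$ gives $|S|+a+2c\le 2$ and pins every odd component of order at least $3$ to be a triangle and every even component to be a $K_2$. The degenerate case just handled is strict, so $|S|+a+2c\in\{0,2\}$. When it equals $0$ we get $n=5$ with $\af(G)=5/2$ and $\ap(G)=2$, so $G$ has a fractional perfect matching; applying Proposition~\ref{fracequiv} to the only admissible partitions of five vertices (one block of size $5$, or blocks of sizes $2$ and $3$) identifies these graphs as exactly those containing $C_5$ or $K_2+K_3$, giving case~$(i)$.

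When $|S|+a+2c=2$, I would rule out the triples $(|S|,a,c)=(2,0,0)$, $(0,2,0)$, $(0,0,1)$ one by one: the first because tightness needs $\df(G)=a-|S|$, impossible since $\df(G)\ge i(G)=0$ forces $a\ge|S|$; the second because $|S|=0$ cannot produce two components in a connected graph; the third because it forces $b=0$ and hence difference $0$. This leaves only $(1,1,0)$, where tightness of the crude bound again gives $\df(G)=0$, so $G$ has a fractional perfect matching. Proposition~\ref{fracequiv} then forces the lone isolated vertex of $G-v$ to pair with $v$ and each order-$3$ component to be a genuine $K_3$, which is precisely case~$(ii)$. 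The main obstacle throughout is this equality analysis: the inequality itself drops out of the two Berge--Tutte formulas and a vertex count, but isolating exactly the configurations $(i)$ and $(ii)$ requires the strictness argument in the $(1,0,0)$ case and the case-by-case elimination above, with Proposition~\ref{fracequiv} doing the work of converting $\df(G)=0$ into the rigid $K_3$ structure.
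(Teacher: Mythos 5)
Your argument is correct, and its first half --- reducing to $\af(G)-\ap(G)=\frac12(\dfc(G)-\df(G))$ via Theorems~\ref{BTF} and~\ref{FBTF} and then counting the vertices of $G-S$ --- is the same strategy the paper uses. There are two genuine differences. First, the paper takes $S$ to be the \emph{largest} set attaining the maximum deficiency; a standard argument then shows $G-S$ has no even components, so your parameter $c$ never appears and your case analysis over $(|S|,a,c)$ collapses to two cases according to whether $G-S$ has an isolated vertex. Second, and more substantively, the equality characterization of case~(ii) is reached by a different mechanism: the paper notes that tightness forces each non-trivial component of $G-v$ to be a connected graph on three vertices, hence $P_3$ or $K_3$, and eliminates $P_3$ by observing that adjoining the centre of a $P_3$ to $S$ preserves the deficiency while enlarging $S$, contradicting the extremal choice of $S$. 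You instead extract from tightness that $\df(G)=a-|S|=0$, so $G$ has a fractional perfect matching, and invoke Proposition~\ref{fracequiv} to force each $3$-vertex component to be Hamiltonian, hence $K_3$; the paper uses Proposition~\ref{fracequiv} only for the $n=5$ case~(i). Both mechanisms are sound: your route treats (i) and (ii) with a single tool and needs no refined choice of $S$, while the paper's choice of the largest maximizer buys a shorter count and a purely deficiency-theoretic elimination of $P_3$. One small wording point: when you say tightness of $n\ge|S|+a+3b+2c$ ``pins every odd component of order at least $3$ to be a triangle,'' at that stage it only pins the order to be exactly $3$; the triangle structure comes only afterwards from Proposition~\ref{fracequiv}, as you do in fact argue in the $(1,1,0)$ case.
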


\begin{proof}
Among all the vertex subsets with maximum deficiency, let $S$ be the largest set. 
By the Berge-Tutte Formula, $\alpha'(G)=\frac 12 (n-\dfc(S))$,
and by the choice of $S$, all components of $G-S$ have an odd number of vertices.
Let $x$ be the number of isolated vertices of $G-S$, and let $y$ be the number of other components of $G-S$.
This implies $n \ge |S|+x+3y$. If $S=\emptyset$, then $\alpha'(G)\in\{\frac n2, \frac{n-1}2\}$, depending on the parity of $n$. 
In this case, $\alpha'_f(G) - \alpha'(G) \le \frac n2 - \frac{n-1}2 = \frac 12 \le \frac{n-2}6$, since $n \ge 5$.
Now, assume that $S$ is non-empty.\\
{\it Case 1: $x=0$}.
Since $\dfc_f(G) \ge 0$, $|S| \ge 1$, and $n \ge |S|+3y$, we have 
$$\alpha'_f(G)-\alpha'(G) = \frac 12 (n-\dfc_f(G)) - \frac 12 (n-\dfc(S)) = \frac 12 (\dfc(S) - \dfc_f(G))$$ 
$$\le \frac 12 (y - |S|  - 0) \le \frac 12(\frac{n-|S|}3 -|S|) = \frac {n-4|S|}6 \le \frac{n-4}6 < \frac {n-2}6.$$
{\it Case 2: $x \ge 1$}. Since $n \ge |S|+x+3y$, $|S|\ge1$, and $x \ge 1$, we have
$$\alpha'_f(G)-\alpha'(G) = \frac 12 (n-\dfc_f(G)) - \frac 12 (n-\dfc(S)) = \frac 12 (\dfc(S) - \dfc_f(G))$$ 
$$\le \frac 12 \left( x+y - |S|  - (x-|S|) \right) \le \frac y2 = \frac {n-x-|S|}6 \le \frac{n-2}6.$$
Equality in the bound requires equality in each step of the computation.
When $n=5$, we conclude that $(i)$ follows by Proposition~\ref{fracequiv}.
In Case 1, we cannot have equality, and in Case 2, we have $|S|=1$, $x=1$, and $n=|S|+x+3y=2+3y$. 
Since $G$ is connected, the components of $G-S$ are $P_3$ or $K_3$ except only one single vertex.
If a component of $G-S$ is a copy of $P_3$, then by choosing the central vertex $u$ of the path, 
we have $\dfc(S\cup \{u\}) = o(G-(S\cup \{u\}))-|S\cup\{u\}|=o(G-S)-|S|$, yet $|S\cup\{u\}| > |S|$,
which contradict the choice of $S$. Thus we have the desired result.
\end{proof}

\begin{cor} \label{main1cor}
For any $n$-vertex graph $G$, we have $\alpha'_f(G) - \alpha'(G) \le \frac n6$, and equality holds only when $G$ is the disjoint union of copies of $K_3$.
\end{cor}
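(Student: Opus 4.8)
The plan is to deduce the corollary from Theorem~\ref{main1} by decomposing $G$ into its connected components. Write $G_1,\ldots,G_k$ for the components of $G$ and let $n_i=|V(G_i)|$, so that $\SE{i}{1}{k} n_i = n$. First I would record that both parameters are additive over components: since every matching (resp.\ fractional matching) of $G$ splits into matchings (resp.\ fractional matchings) of the $G_i$, and the constraint $\sum_{e\in\Gamma(v)}\phi(e)\le 1$ is local to the component containing $v$, we have $\ap(G)=\SE{i}{1}{k}\ap(G_i)$ and $\af(G)=\SE{i}{1}{k}\af(G_i)$. Consequently $\af(G)-\ap(G)=\SE{i}{1}{k}\bigl(\af(G_i)-\ap(G_i)\bigr)$, and it suffices to prove the per-component bound $\af(G_i)-\ap(G_i)\le n_i/6$, with equality exactly when $G_i$ is $K_3$.

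For components with $n_i\ge 5$, Theorem~\ref{main1} gives $\af(G_i)-\ap(G_i)\le (n_i-2)/6 < n_i/6$, so these never contribute an equality. The remaining work is the finite check for $n_i\in\{1,2,3,4\}$. For $n_i=1$ or $n_i=2$ the only connected graphs are $K_1$ and $K_2$, both with $\af=\ap$, so the difference is $0<n_i/6$. For $n_i=3$ the connected graphs are $P_3$ and $K_3$; here $\af(P_3)=\ap(P_3)=1$ gives difference $0$, while $\af(K_3)=3/2$ and $\ap(K_3)=1$ give difference $1/2=n_i/6$, the one equality case. For $n_i=4$ I would argue that a connected graph either has a perfect matching, forcing $\ap(G_i)=\af(G_i)=2$, or has $\ap(G_i)=1$, which for a connected $4$-vertex graph happens only for the star $K_{1,3}$ (any triangle would leave the fourth vertex attached by an edge disjoint from the opposite triangle edge, giving a matching of size $2$), where $\af(K_{1,3})=1$ as well; in both subcases the difference is $0<n_i/6$.

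Combining the cases, every component satisfies $\af(G_i)-\ap(G_i)\le n_i/6$, and summing yields $\af(G)-\ap(G)\le n/6$. For equality, each summand must attain $n_i/6$, which by the case analysis forces every $G_i$ to be $K_3$; hence $G$ is a disjoint union of copies of $K_3$. Conversely, such a graph on $n=3k$ vertices has $\af=3k/2$ and $\ap=k$, realizing the bound exactly. The main obstacle is the small-order casework: one must enumerate all connected graphs on at most four vertices and compute their fractional matching numbers precisely, since this is exactly where the unique extremal component $K_3$ lives and where the strict-versus-nonstrict distinction in each component is decided.
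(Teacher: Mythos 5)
Your proposal is correct and follows essentially the same route as the paper: additivity of $\ap$ and $\af$ over components, Theorem~\ref{main1} for components on at least $5$ vertices, and a finite check of connected graphs on at most $4$ vertices isolating $K_3$ as the unique equality case. The only cosmetic difference is in the $n_i=4$ case, where you split on whether a perfect matching exists rather than on whether the graph is $K_{1,3}$ or contains $P_4$; both arguments are valid.
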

\begin{proof}
First, we show that if $n \le 4$ and $G$ is connected, then $\af(G)-\ap(G) \le \frac n6$, and equality holds only when $G=K_3$.
If $n \le 2$, then $G\in\{K_1, K_2\}$, which implies that $\af(G)-\ap(G) = 0 < n/6$. If $n=3$, then $G\in\{P_3, K_3\}$.
Note that $\af(P_3)-\ap(P_3)=1-1=0 < 3/6$ and $\af(K_3)-\ap(K_3)=3/2 - 1 = 1/2 \le 3/6$.
Furthermore, equality holds only when $G=K_3$. 
If $n=4$, then either $G=K_{1,3}$ or $G$ contains $P_4$ as a subgraph.
Since $\af(K_{1,3}) - \ap(K_{1,3})=1-1=0 < 4/6$ and $\af(P_4)-\ap(P_4)=2-2=0 < 4/6$,
we conclude that for any positive integer $n$, $\af(G)-\ap(G) \le \frac n6$. In fact, if $n \ge 5$,  then by Theorem~\ref{main1}, the difference must be at most $\frac {n-2}6$. Thus, for connected graphs, equality holds only when $G=K_3$ .

Now, if we assume that $G$ is disconnected, then $G$ is the disjoint union of connected graphs $G_1, \ldots, G_k$. Let $|V(G_i)|=n_i$ for $i \in [k]$.
Since
$$\af(G)-\ap(G) = \left[\af(G_1)+\cdots +\af(G_k)\right] - \left[\ap(G_1)+\cdots +\ap(G_k)\right]$$ $$= [\af(G_1)-\ap(G_1)] + \cdots +  [\af(G_k)-\ap(G_k)] \le \frac{n_1}6 + \cdots + \frac{n_k}6 = \frac n6,$$ equality holds only when each $G_i$ is a copy of $K_3$ for $i \in [k]$.
\end{proof}

\section{Sharp upper bound for $\frac{\af(G)}{\ap(G)}$}\label{lower}
To prove the upper bound of Theorem~\ref{main2}, we still can use the Berge-Tutte formula and its fractional analogue.
However, we provide an alternative way to prove the theorem.

\begin{thm}\label{main2}
For $n \ge 5$, if $G$ is a connected graph with $n$ vertices, then $\frac{\af(G)}{\ap(G)} \le \frac{3n}{2n+2}$,
and equality holds only when either\\
(i) $n=5$ and either $C_5$ is a subgraph of $G$ or  $K_2+K_3$ is a subgraph of $G$, or \\
(ii) $G$ has a vertex $v$ such that the components of $G-v$ are all $K_3$ except one single vertex.
\end{thm}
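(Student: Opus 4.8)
The plan is to avoid the Berge--Tutte route used for Theorem~\ref{main1} and argue directly from the weighted structure supplied by Theorem~\ref{frac01/21} and Observation~\ref{obs}. First I would fix a fractional matching $f$ attaining $\af(G)$ with all weights in $\{0,\frac12,1\}$ and with the largest possible number of $1$-edges, and record the three identities of Observation~\ref{obs}(c): writing $w_0,w_1$, and $c_i$ for the numbers of unweighted vertices, $1$-edges, and $(2i+1)$-cycles among the $\frac12$-edges, we have $\af(G)=w_1+\sum_i\frac{2i+1}2 c_i$, $n=w_0+2w_1+\sum_i(2i+1)c_i$, and the crucial lower bound $\ap(G)\ge L:=w_1+\sum_i i\,c_i$.

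Next I would convert the target $\frac{\af(G)}{\ap(G)}\le\frac{3n}{2n+2}$ into the polynomial inequality $(2n+2)\af(G)\le 3n\,\ap(G)$. Since $\ap(G)\ge L$, it suffices to prove $(2n+2)\af(G)\le 3nL$. A short computation rewrites this as $2\af(G)\le nM$, where $M:=3L-2\af(G)=w_1+\sum_i(i-1)c_i$; and since the second identity gives $2\af(G)=n-w_0$, the inequality becomes simply $n(1-M)\le w_0$. Thus the whole estimate reduces to showing $M\ge1$. The one place where connectivity enters is exactly this claim. Indeed $M=0$ forces $w_1=0$ and every $\frac12$-cycle to be a triangle; but with no $1$-edges there are no full vertices, so by Observation~\ref{obs}(b) every unweighted vertex is isolated and connectivity gives $w_0=0$, while Observation~\ref{obs}(a) forbids edges between distinct $\frac12$-cycles, so connectivity leaves a single triangle and $n=3$, contradicting $n\ge5$. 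Hence $M\ge1$, so $n(1-M)\le0\le w_0$, and the bound follows.

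The genuinely hard part is the equality analysis. Tracing the chain backwards, equality forces $\ap(G)=L$, $M=1$, and $w_0=0$. The condition $M=1$ splits into two cases: either $w_1=0$ with exactly one $5$-cycle (and no other cycle, once connectivity via Observation~\ref{obs}(a),(b) is imposed), giving $n=5$, or $w_1=1$ with every $\frac12$-cycle a triangle. For $n=5$ I would finish as in Theorem~\ref{main1} by invoking Proposition~\ref{fracequiv}, so that $\af(G)=\frac52$ forces $C_5$ or $K_2+K_3$ as a spanning subgraph, which is case $(i)$. In the remaining case the two full vertices $u,w$ together with the triangles are all that is present (as $w_0=0$), and the real work is to show that the tightness $\ap(G)=L=1+\sum_i c_i$ rigidly forces the structure in $(ii)$: setting $v=u$, each triangle must attach only to $v$, and $w$ must be a pendant whose only neighbor is $v$, so that $G-v$ consists of triangles together with the single vertex $w$.

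I expect essentially all the difficulty to lie in this last rigidity step. The point is that $\ap(G)=L$ is extremely restrictive: any other attachment---for instance a triangle joined to $w$, or two triangles linked through $w$---creates an augmenting configuration that produces a matching strictly larger than $L$ (one can already see this for two triangles attached to $u$ and $w$ respectively on eight vertices, where a perfect matching appears and the ratio drops below $\frac{3n}{2n+2}$). Making this precise means systematically ruling out every configuration with $\ap(G)>L$, presumably by exhibiting the witnessing set $S$ with $\dfc(S)$ too small and invoking the Berge--Tutte formula, and then reconciling the borderline count with the $n=5$ case handled by Proposition~\ref{fracequiv}.
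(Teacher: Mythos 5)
Your proposal is correct and, at bottom, runs on the same engine as the paper's proof: both fix a $\{0,\tfrac12,1\}$-valued optimal fractional matching with the maximum number of $1$-edges and read everything off Observation~\ref{obs}. The difference is in how the inequality is extracted. The paper splits into four cases according to whether $w_0$ and $w_1$ vanish, using the estimate $\sum_i c_i\le\frac{n-2w_1-w_0}{3}$; you instead substitute the exact identity $2\af(G)=n-w_0$ and reduce the entire bound to the single integrality statement $M=w_1+\sum_i(i-1)c_i\ge1$, with connectivity entering only to rule out $M=0$. This is a genuine streamlining: it dispenses with the case analysis, and it makes the equality conditions ($\ap(G)=L$, $M=1$, $w_0=0$) fall out in one step rather than being traced through separate chains of inequalities (the paper's Case 1 and Case 3). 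Your split of $M=1$ into ``one $C_5$ and no $1$-edge'' versus ``one $1$-edge and all triangles'' matches the paper's equality analysis exactly, and the rigidity step you isolate at the end is the same one the paper performs: if distinct triangles attach to the two full vertices $u$ and $w$, a matching of size $L+1$ appears. Two small cautions. First, your phrasing that ``a triangle joined to $w$ \dots creates a matching strictly larger than $L$'' is not literally true --- if every triangle attaches only to $w$ you simply obtain configuration $(ii)$ with $v=w$ after relabeling; the genuinely forbidden patterns are a triangle adjacent to $u$ together with a different triangle adjacent to $w$, and (when $c_1\ge2$) a single triangle adjacent to both, each of which yields a matching of size $c_1+2>L$ by taking the two pendant edges into those triangles plus the leftover triangle edges plus one edge from every other triangle. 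Second, the detour through the Berge--Tutte deficiency formula that you ``presume'' for this step is unnecessary: exhibiting the larger matching directly, as the paper does, already closes the argument.
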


\begin{proof}
Among all the fractional matchings of an $n$-vertex graph $G$ with the size equal to $\af(G)$, let $f$ be a fractional matching such that the number of edges $e$ with $f(e)=1$ is maximized. We follow the notation in Observation~\ref{obs}.\\
{\it Case 1: $w_0=w_1=0$.} Since $G$ is connected and $n\ge 5$, there exists only one $i$ such that $i \ge 2$ and $c_i$ is not zero, and $\ap(G)=ic_i\neq 0$.
Then we have 
$$\frac{\af(G)}{\ap(G)} \le \frac{(\frac{2i+1}{2})c_i}{ic_i} = 1 + \frac{1}{2i} \le \frac 54.$$
{\it Case 2: $w_0 \ge 1$ and $w_1=0$}. By part (b) of Observation~\ref{obs}, this cannot happen.\\
{\it Case 3: $w_0=0$ and $w_1 \ge 1$.}  Since $\sum_{i=1}^{\infty}c_i \le \frac{n-2w_1}3$, by part (c) of Observation~\ref{obs}, we have
$$\frac{\af(G)}{\ap(G)} \le \frac{w_1+\sum_{i=1}^{\infty}(\frac{2i+1}2)c_i}{w_1+\sum_{i=1}^{\infty}ic_i}
=\frac{\frac{n-w_0}2}{\frac{n-w_0-\sum_{i=1}^{\infty}c_i}2}=\frac{n}{n-\sum_{i=1}^{\infty}c_i} \le \frac n{n-\frac{n-2w_1}3}=\frac{3n}{2n+2w_1}\le \frac {3n}{2n+2}.$$
{\it Case 4: $w_0\! \ge \!1$ and $w_1 \!\ge \!1$.} Since $\sum_{i=1}^{\infty}c_i \le \frac{n-2w_1-w_0}3$, by part (c) of Observation~\ref{obs}, we have
$$\frac{\af(G)}{\ap(G)} \le  \frac{\frac{n-w_0}2}{\frac{n-w_0-\sum_{i=1}^{\infty}c_i}2}  \le 
\frac{n-w_0}{n-w_0-\frac{n-2w_1-w_0}3}  = \frac{3(n-w_0)}{2(n+w_1-w_0)} < \frac{3n}{2(n+w_1)} \le \frac{3n}{2(n+1)}.$$
Equality in the bound requires equality in each step of the computation; we only need to check Case 1 and Case 2.
In Case 1, we have $i=2$, which means that $n=5$ and $G$ contains a copy of $C_5$. 
In Case 3, we have $w_1=1$ and $\sum_{i=1}^{\infty}c_i=\frac{n-2}3$, which means that the graph induced by the $\frac 12$-edges is the union of $K_3$.
Thus $G$ has $K_2+kK_3$ as a subgraph for some positive integer $k$. 
Note that there is an edge between the copy of $K_2$ and any copy of $K_3$ by part (b) of Observation~\ref{obs}. Also, there are no edges between any pair of two trianges by part (a) of Observation~\ref{obs}. 
Let $u$ and $v$ be the two vertices corresponding to the copy of $K_2$.
If there are two different triangles $C$ and $C'$ in $G$ such that $u$ and $v$ are incident to $C$ and $C'$, respectively, 
then we have $\ap(G) > w_1 + c_1$, which implies that we cannot have equality in the first inequality in Case 3.
Thus, we conclude that $G$ contains a copy of either $K_2+K_3$ as a subgraph or a vertex $v$ such that the components of $G-v$ are all $K_3$ except only one single vertex.
\end{proof}

\begin{cor}
For any $n$-vertex graph $G$ with at least one edge, we have $\frac{\af(G)}{\ap(G)} \le \frac 32$, and equality holds only when $G$ is the disjoint union of copies of $K_3$.
\end{cor}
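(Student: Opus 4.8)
The plan is to mirror the proof of Corollary~\ref{main1cor}: first establish the bound for connected graphs with at least one edge, and then deduce the general case by decomposing $G$ into its connected components. The new difficulty relative to Corollary~\ref{main1cor} is that the ratio $\af(G)/\ap(G)$ does not split additively over components the way the difference $\af(G)-\ap(G)$ does, so simply summing per-component ratios is meaningless. I would circumvent this by clearing denominators and phrasing the per-component bound as the linear inequality $2\af(H)\le 3\ap(H)$, which \emph{is} additive over components.

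First I would handle a connected graph $H$ with at least one edge. For $|V(H)|\ge 5$, Theorem~\ref{main2} gives $\af(H)/\ap(H)\le \frac{3n}{2n+2}<\frac32$, so the bound holds strictly. For $|V(H)|\le 4$ I would check the finitely many cases directly: $K_2$ and $P_3$ give ratio $1$; $K_3$ gives $\af(K_3)/\ap(K_3)=\frac{3/2}{1}=\frac32$; and every connected graph on $4$ vertices (for instance $P_4$, $C_4$, $K_{1,3}$, the paw, the diamond, and $K_4$) has ratio strictly below $\frac32$, since $\af\le n/2=2$ there, while the only connected $4$-vertex graph with $\ap=1$ is $K_{1,3}$, which has $\af=1$ as well. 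This shows that among connected graphs with an edge, $\af(H)/\ap(H)\le\frac32$, with equality precisely when $H=K_3$.

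Next I would pass to an arbitrary $G$ with at least one edge. Writing $G$ as the disjoint union of its connected components $G_1,\dots,G_k$ and using additivity $\af(G)=\sum_i\af(G_i)$ and $\ap(G)=\sum_i\ap(G_i)$, I note that a single-vertex component contributes $0$ to both sums (trivially satisfying $0\le 0$), while every component with an edge satisfies $2\af(G_i)\le 3\ap(G_i)$ by the connected case. Summing these inequalities yields $2\af(G)\le 3\ap(G)$, that is $\af(G)/\ap(G)\le\frac32$, where $\ap(G)\ge 1$ because $G$ has an edge. For equality, tightness of the summed inequality forces $2\af(G_i)=3\ap(G_i)$ for each component with an edge, so by the connected case every such component is a copy of $K_3$; hence $G$ is a disjoint union of triangles (possibly together with isolated vertices, which leave the ratio unchanged). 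The step I expect to demand the most attention is the $4$-vertex case-check, since one must confirm that attaching a pendant or chord to a triangle always increases $\ap$ enough to push the ratio strictly below $\frac32$; everything else is bookkeeping layered on Theorem~\ref{main2}.
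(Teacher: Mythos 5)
Your proof is correct and follows the same overall strategy as the paper: verify the connected case for $n\le 4$ by direct inspection, invoke Theorem~\ref{main2} for connected graphs on $n\ge 5$ vertices (where the bound is strict), and then reduce a disconnected $G$ to its components. The one place you diverge is the combination step: the paper bounds $\frac{\af(G)}{\ap(G)}=\frac{\sum_i\af(G_i)}{\sum_i\ap(G_i)}$ by the maximum of the per-component ratios (a mediant inequality), whereas you clear denominators and sum the linear inequalities $2\af(G_i)\le 3\ap(G_i)$. Your version is slightly more robust: the paper's per-component ratio $\frac{\af(G_i)}{\ap(G_i)}$ is undefined when $G_i$ is an isolated vertex, a case the hypothesis ``$G$ has at least one edge'' does not exclude at the component level, while your linearized form handles $\af(G_i)=\ap(G_i)=0$ without comment. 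You also correctly observe that adjoining isolated vertices to a disjoint union of triangles leaves the ratio at exactly $\frac32$, so the equality characterization as stated (and as argued in the paper) should strictly speaking permit isolated-vertex components; this is a genuine, if minor, slip in the paper that your bookkeeping surfaces. Your $4$-vertex case check is also sound: every connected $4$-vertex graph other than $K_{1,3}$ has $\ap=2$ and $\af\le 2$, and $K_{1,3}$ has $\af=\ap=1$.
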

\begin{proof}
By the proof of Corollary~\ref{main1cor}, if $n \le 4$ and $G$ is connected, then $\frac{\af(G)}{\ap(G)} \le \frac 32$, and equality holds only when $G=K_3$.
If we assume that $G$ is disconnected, then $G$ is the disjoint union of connected graphs $G_1, \ldots, G_k$. Let $|V(G_i)|=n_i$ for $i \in [k]$.
Without loss of generality, we may assume that $\frac{\af(G_1)}{\ap(G_1)} \ge \frac{\af(G_i)}{\ap(G_i)}$ for all $i \in [k]$. Then we have
$$\frac{\af(G)}{\ap(G)} = \frac{\af(G_1)+\cdots +\af(G_k)}{\ap(G_1)+\cdots +\ap(G_k)}\le \frac{\af(G_1)}{\ap(G_1)} \le \frac 32,$$ and equality holds only when each $G_i$ is a copy of $K_3$ for $i \in [k]$.
\end{proof}

\end{document}